\let\uml\"
\title{Results on a Strong Multiplicity One Theorem} 
\author{Chandrasheel Bhagwat}
\address{A-414, Main Building, Indian Institute of Science Education and Research, Dr.\,Homi Bhabha Road, Pashan, Pune 411008, India} 
\email{cbhagwat@iiserpune.ac.in}
\author{Gunja Sachdeva}
\address{Department of Mathematics, BITS Pilani, K.K. Birla Goa Campus, Zuarinagar, Goa 403726, India}
\email{gunjas@goa.bits-pilani.ac.in}
\keywords{Lie groups, Representation theory, Symmetric spaces, Spectral theory}
\subjclass[2020]{22E40, 22E45, 53C35}
\numberwithin{equation}{section}
\newtheorem{thm}[equation]{Theorem}
\newtheorem*{thm*}{Theorem}
\newtheorem*{cor*}{Corollary}
\newtheorem*{conj*}{Conjecture M}
\newtheorem{cor}[equation]{Corollary}
\newtheorem{lem}[equation]{Lemma}
\newtheorem{prop}[equation]{Proposition}
\theoremstyle{definition}
\newtheorem{rem}[equation]{Remark}
\newtheorem{defn}[equation]{Definition}
\def\R{\mathbb R}
\def\H{\mathcal H}
\def\Z{\mathbb Z}
\def\C{\mathbb C}
\def\la{\langle}
\newcommand{\ra}{\rangle}
\newcommand{\bi}{{\mathbf i}}
\def\cC{\mathcal C}
\def\<{\langle}
\def\>{\rangle}
\def\SL{{\rm SL}}
\def\SO{{\rm SO}}
\begin{document} 
 
\begin{abstract}  
      We prove an analogue of the strong multiplicity one theorem in the context of $\tau_n$-spherical representations of the group $G = \SO(2,1)^\circ$ appearing in $L^2(\Gamma_i \backslash G)$ for uniform torsion-free lattices $\Gamma_i, i = 1, 2$ in $G$. This is a generalisation of a previous result by the first author and C. S. Rajan in \cite{B-R-2011} for the case of $G = \SO(2,1)^\circ$.
\end{abstract} 
\maketitle

\tableofcontents

\section{Introduction}
It is well known that the representation theory of semisimple Lie groups and the spectral theory of differential operators on associated symmetric and locally symmetric spaces are intimately related. In the context of discrete subgroup $\Gamma$ of a Lie group $G$ with compact quotient, the spectra are discrete, and hence it makes sense to study the multiplicity function for eigenvalues of Laplacian operator as well as the irreducible representations appearing in function spaces on $\Gamma \backslash G$.\smallskip

In \cite{B-R-2011}, C.S. Rajan and the first author established a strong multiplicity one property for $L^2(\Gamma \backslash G)$. In the same article, they have also studied the rank-1 semisimple Lie groups and  established a strong multiplicity one property for the $K$-spherical representations of rank-1 semisimple Lie group $G$, equivalently for Laplacian eigenvalues for $\cC^\infty(\Gamma \backslash G / K)$. \smallskip

The aim of this short note is to prove an analogue of the above results for the $\tau$-spherical representations of $\SO(2,1)^\circ$ for irreducible representations $\tau$ of its maximal compact subgroup $K \cong \SO(2)$. We use the classification of irreducible representations of $\SL(2,\R)$ and $\SO(2,1)^\circ$, their $K$-types, some properties of the function spaces on $\SO(2,1)^\circ$ with respect to the regular action of $K \times K$, and Selberg trace formula (as in \cite{B-R-2011}). \smallskip

Our main theorem is
\begin{thm*}\label{main result intro}
Let $G =  \SO(2,1)^\circ$, $\tau$ be an irreducible representation of $K$, and $\widehat{G}_{\tau}$ be the set of isomorphism classes of irreducible unitary representations $(\pi, V)$ of $G$ such that  the $\tau$-isotypic component $V^{\tau} \neq (0)$. Suppose $\Gamma_1$ and $\Gamma_2$ are uniform torsion-free lattices in $G$ such that the multiplicities of $\pi$ in $L^2(\Gamma_i \backslash G)$ for $i=1, 2$ satisfy
\[ m(\pi, \Gamma_1) = m(\pi, \Gamma_2) \quad \text
{for all but finitely many representations} \ \pi \in \widehat{G}_{\tau}.\] 
\[\text{Then,} \quad m(\pi, \Gamma_1) = m(\pi, \Gamma_2) \quad \text
{for all representations}\ \pi \in \widehat{G}_{\tau}.\]
\end{thm*}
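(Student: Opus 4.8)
The plan is to combine the classification of the unitary dual of $G=\SO(2,1)^\circ$ with the Selberg trace formula applied to $K$-finite test functions, following the strategy of \cite{B-R-2011} for the $K$-spherical case, but now carrying along the contribution of the finitely many discrete series lying in $\widehat{G}_{\tau}$.

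\emph{Step 1: the set $\widehat{G}_{\tau}$ and a single spectral parameter.} Writing $\tau=\tau_n$ for a character of $K\cong\SO(2)$, the classification of irreducible unitary representations of $\SL(2,\R)$, hence of $G$, together with the description of their $K$-types shows that $\widehat{G}_{\tau}$ consists of a one-parameter family of unitary principal series, a bounded interval of complementary series, finitely many discrete series (exactly those whose $K$-type decomposition contains $\tau$), and possibly the trivial representation. Since each $K$-type of an irreducible representation of $G$ occurs with multiplicity at most one, $V^{\tau}$ is at most one-dimensional for every $\pi\in\widehat{G}_{\tau}$. Moreover the Casimir eigenvalue $\mu_{\pi}$ separates the elements of $\widehat{G}_{\tau}$ (the only coincidence in the full dual, between a holomorphic and an antiholomorphic discrete series of equal weight, does not occur inside a single $\widehat{G}_{\tau}$), and one records a parameter $\lambda_{\pi}$, defined up to sign, with $\mu_{\pi}$ a fixed quadratic function of $\lambda_{\pi}$, so that the $\lambda_{\pi}^{2}$ for $\pi\in\widehat{G}_{\tau}$ are pairwise distinct.

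\emph{Step 2: test functions and the $\tau$-spherical transform.} I would work with $f\in\cC_c^\infty(G)$ transforming on both sides under $K$ by $\tau$; using the Cartan decomposition $G=KAK$ such an $f$ is determined by its restriction $F=f|_A$, an even element of $\cC_c^\infty(\R)$. For each $\pi\in\widehat{G}_{\tau}$ the operator $\pi(f)$ is scalar, $\pi(f)=\widehat{f}(\pi)\cdot\mathrm{id}$ on $V^{\tau}$, so $\operatorname{tr}R_{\Gamma_i}(f)=\sum_{\pi\in\widehat{G}_{\tau}}m(\pi,\Gamma_i)\,\widehat{f}(\pi)$. The key analytic input — a Paley--Wiener theorem for the $\tau$-spherical transform — is that $\lambda\mapsto\widehat{f}(\pi_\lambda)$ extends to an even entire function $h_f(\lambda)$ of exponential type at most the radius of the support of $F$, that $h_f$ also determines $\widehat{f}(\pi)$ at the discrete series and at the trivial representation (by evaluating the analytic continuation of $h_f$ at the corresponding reducibility points of the principal series, the $K$-type bookkeeping of Step 1 ensuring that no other Jordan--H\"older constituent of the reducible principal series contributes the $K$-type $\tau$), and that every such $h_f$ arises from an admissible $f$. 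Consequently $\widehat{f}(\pi)=h_f(\lambda_\pi)$ for all $\pi\in\widehat{G}_{\tau}$, and $h_f$ ranges over a full Paley--Wiener space as $F$ ranges over even functions supported in a fixed interval.

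\emph{Step 3: trace formula, volumes, and point separation.} Applying the Selberg trace formula to such $f$ for $\Gamma_1$ and $\Gamma_2$ and subtracting, the spectral side collapses by hypothesis to the finite sum $\sum_{\pi\in S}c_\pi\,\widehat{f}(\pi)$, where $c_\pi=m(\pi,\Gamma_1)-m(\pi,\Gamma_2)$ and $S$ is the exceptional set, while the geometric side is $\bigl(\operatorname{vol}(\Gamma_1\backslash G)-\operatorname{vol}(\Gamma_2\backslash G)\bigr)f(e)$ plus a difference of weighted hyperbolic orbital integrals, there being no other conjugacy classes because the $\Gamma_i$ are torsion-free. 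First I would observe that the hypothesis forces $\operatorname{vol}(\Gamma_1\backslash G)=\operatorname{vol}(\Gamma_2\backslash G)$: the counting function of Casimir eigenvalues of the $\tau$-spherical spectrum of $\Gamma_i\backslash G$ satisfies Weyl's law with leading term proportional to this volume, and two such counting functions differing by a bounded amount must have the same leading term. Hence the identity term drops. Next, choosing $f$ with $F$ supported in an interval shorter than the length of the shortest closed geodesic of both $\Gamma_1\backslash G/K$ and $\Gamma_2\backslash G/K$ (which are closed hyperbolic surfaces, so this length is positive) makes every hyperbolic orbital integral vanish, leaving $\sum_{\pi\in S}c_\pi\,h_f(\lambda_\pi)=0$ for every $h_f$ in a Paley--Wiener space of arbitrarily small exponential type. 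Such a space still separates the finitely many distinct points $\lambda_\pi^{2}$, $\pi\in S$, so $c_\pi=0$ for all $\pi\in S$, i.e.\ $m(\pi,\Gamma_1)=m(\pi,\Gamma_2)$ for all $\pi\in\widehat{G}_{\tau}$.

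\emph{Main obstacle.} I expect the crux to be Step 2: making precise, via the classification and the $K$-type multiplicities, that the whole family $\bigl(\widehat{f}(\pi)\bigr)_{\pi\in\widehat{G}_{\tau}}$ — discrete series included — is recovered from a single Paley--Wiener entire function through point evaluations, together with the surjectivity statement guaranteeing enough test functions with prescribed transform and small support. Once this $\tau$-spherical transform is pinned down, the volume comparison and the point-separation argument are straightforward.
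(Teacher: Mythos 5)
Your argument is sound and would prove the theorem, but it runs along a genuinely different track from the paper's. The paper never introduces the $\tau$-spherical transform or a Paley--Wiener theorem. Instead it writes the spectral side of the (subtracted) trace formula directly as $\int_G f\,\phi\,d\mu$, where $\phi=\sum_{\pi\in S}c_\pi\phi_\pi$ is a finite combination of matrix coefficients $\phi_\pi(x)=\langle\pi(x)e_\pi,e_\pi\rangle$; this uses the identity $\chi_\pi(f)=\int_G f\,\phi'_\pi$ for $f\in\cC_c^\infty(G//K,\tau_{-n})$, proved by showing $\pi(f)V\subseteq H_{n}$ for such $f$. It then picks a $K\times K$-stable open set $B$ disjoint from \emph{all} conjugacy classes of elements of $\Gamma_1\cup\Gamma_2$ --- including the identity class --- so that the entire geometric side vanishes for every $f$ supported in $B$. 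Point separation of $\cC_c^\infty(G//K,\tau_{-n})$ on $K\times K$-orbits forces $\phi\equiv 0$ on $B$, analyticity propagates this to all of $G$, and linear independence of the $\phi_\pi$ (eigenfunctions of distinct characters of $Z(U_{\mathfrak g})$) kills each coefficient $c_\pi$. Your route replaces these soft ingredients with heavier harmonic analysis: the $\tau_n$-spherical Paley--Wiener theorem, including recovery of the discrete-series values by evaluation at reducibility points (you correctly identify this as the crux, and your $K$-type bookkeeping is the right justification that only the discrete series constituent of $V(m-1)$ carries $\tau_n$), plus a Weyl law for sections of the line bundle to equate $\operatorname{vol}(\Gamma_1\backslash G)$ and $\operatorname{vol}(\Gamma_2\backslash G)$. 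The paper's choice of $B$ avoiding the identity renders that Weyl-law step unnecessary. One caveat on your Step 2: for $n\neq 0$ the image of the $\tau_n$-spherical transform is not exactly the space of \emph{even} entire functions of exponential type --- the symmetry under $\lambda\mapsto-\lambda$ holds only up to the ratio of intertwining-operator factors on the $\tau_n$-isotypic line --- but this does not damage the final point-separation argument at finitely many distinct values of $\lambda_\pi^2$. Both proofs are valid; the paper's is the more elementary and self-contained, while yours makes the spectral-parameter structure of $\widehat{G}_{\tau_n}$ explicit at the cost of invoking deeper transform-side theorems.
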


We also establish the relation for the multiplicities of $\pi \in \widehat{G}_{\tau}$ in $L^2(\Gamma \backslash G)$ and multiplicities of eigenvalues of Laplacian in $\cC^\infty(\Gamma \backslash G, \tau)$ in Proposition ~\ref{multi-rep-eigen}. We note that the Laplacian eigenvalues corresponding to the non-trivial discrete series representations are negative. \smallskip

It is well-known that these eigenvalues are exactly the eigenvalues of the Laplacian operator acting on the smooth sections of the homogeneous vector bundles on $G/K$ defined by $\tau$, and hence all these eigenvalues must be positive. This bodes well with the conjecture that the  non-trivial discrete series representations do not appear in $L^2(\Gamma \backslash G)$.

\section{Representations of $\SO(2,1)^\circ$ and associated functions on hyperbolic plane}

\subsection{Lie groups}

Consider the real Lie group $\SO(2, 1)(\R) $ defined by
\[ \SO(2, 1)(\R) = \Bigg\{ g \in \SL_3(\R) \ : \ g^{T}
\left[\begin{array}{ccc}
     1& 0&0 \\
     0 &1&0 \\
     0&0&-1
\end{array}\right] g = \left[\begin{array}{ccc}
     1& 0&0 \\
    0 &1&0 \\
    0&0&-1
\end{array}\right]
\Bigg\}.\]

Let $ G = \SO(2, 1)^{\circ}$ be the connected component of $1_G$ in $\SO(2, 1)(\R) $. An Iwasawa decomposition of $G$ is given by $G = ANK$ where
\[
\begin{split}
A & = \Bigg\{a_t =  \left[\begin{array}{ccc}
    \cosh t & 0 &\sinh t \\
     0& 1&0\\
     \sinh t & 0 & \cosh t
\end{array}\right] \ : \ t \in \R\Bigg\},\\
N &= \Bigg\{n_u =  \left[\begin{array}{ccc}
    1 - \dfrac{u^2}{2} & u &\dfrac{u^2}{2} \\
     -u& 1&u\\
     -\dfrac{u^2}{2} & u & 1 + \dfrac{u^2}{2}
\end{array}\right] \ : \  u\in \R\Bigg\},\\
K & = \Bigg\{ k_\theta = \left[\begin{array}{ccc}
    \cos\theta & -\sin\theta &0\\
    \sin\theta & \cos\theta &0\\
    0&0&1
\end{array}\right] \ : \ \theta \in [0, 2\pi]\Bigg\}  = \Bigg\{ \left[\begin{array}{cc}
    B &  \\
     & 1
\end{array}\right] \ : \ B \in \SO(2)\Bigg\}.
\end{split}
\]

\noindent
{\bf An isomorphism between ${\rm PSL}_2(\R)$ and $\SO(2, 1)^\circ$}:
We fix an isomorphism between the groups ${\rm PSL}_2(\R)$ and $\SO(2, 1)^\circ$ once and for all. \\

Define $\Psi : \SL_2(\R) \longrightarrow \SO(2, 1)^\circ$ by
\[
\Psi\left( \left[\begin{array}{cc}
    a & b \\
    c & d
\end{array}\right]\right) = \left[ \begin{array}{ccc}
\dfrac{1}{2}(a^2 - b^2 -c^2+d^2)     &  ab-cd & \dfrac{1}{2}(a^2 + b^2 -c^2-d^2) \\
    ac-bd & ad+bc & ac+bd\\
    \dfrac{1}{2}(a^2 - b^2 +c^2-d^2)& ab+cd & \dfrac{1}{2}(a^2 + b^2 +c^2+d^2)  
\end{array}\right].
\]
It can be checked that $\Psi$ is a group homomorphism, and kernel($\Psi$) =  $(\pm I_2)$. Thus 
$\Psi$ defines an isomorphism of Lie groups between ${\rm PSL}_2(\R)$ and $\SO(2, 1)^\circ$. We also note down the images of some special elements of $\SL_2(\R) $ under $\Psi$ here.

\[
\begin{split}
\Psi \left(  \left[ \begin{array}{cc} e^t & 0 \\ 0 & e^{-t} \end{array} \right] \right)
= a_{2t} = \left[ \begin{array}{ccc}
    \cosh 2t & 0 &\sinh 2t \\
     0& 1&0\\
     \sinh 2t & 0 & \cosh 2t
\end{array}\right] \quad \forall \ t \in \R, \\
\Psi  \left( \left[ \begin{array}{cc} 1 & u \\ 0 & 1 \end{array}\right] \right)
= n_u = \left[\begin{array}{ccc}
    1 - \dfrac{u^2}{2} & u &\dfrac{u^2}{2} \\
     -u& 1&u\\
     -\dfrac{u^2}{2} & u & 1 + \dfrac{u^2}{2}
\end{array}\right]  \quad \forall \ u \in \R, \\
\Psi \left(  \left[ \begin{array}{cc} \cos \theta & -\sin \theta \\ \sin \theta & \cos \theta \end{array}\right] \right)
=  k_{2 \theta} = \left[\begin{array}{ccc}
    \cos 2\theta & -\sin 2\theta &0\\
    \sin 2\theta & \cos 2\theta &0\\
    0&0&1
\end{array}\right]  \quad \forall \ \theta \in \R.
\end{split}
\]
Thus the Iwasawa decompositions for ${\rm SL}_2(\R)$ and $\SO(2, 1)^\circ$ are related via the map $\Psi$. We make use of this fact later in the discussion for $K$-types of irreducible unitary representations of $\SO(2, 1)^\circ$.
\smallskip

\subsection{Induced representations of $G$}

Fix Haar measures on $G$ and $K$ such that $\int \limits_K 1~ dk  = 1$. We identify $dk$ with the normalised Lebesgue measure $\dfrac{d\theta}{2 \pi}$ on $[0, 2 \pi]$. Let $s \in \C$. Define a character of the torus $A$ of $G =\SO(2,1)^\circ$ by 
\[ \mu_s: a_t \mapsto e^{st} \quad \forall \ t \in \R. \]

Inflate $\mu_s$ to the Borel subgroup $B: = NA = AN$ of $G$ and then consider the induced space $V(s)$ consisting of all measurable functions 
$f: G \rightarrow \C$ such that $f_{|_{K}} \in L^2(K, dk)$ and
\[ f(b g) = e^t \mu_s(a_t) f(g) \quad \forall \ b = a_t n_u \in B = AN, \ g \in G.\]
Define a representation $\rho_s$ of $G$ on $V(s)$ by
\[ \rho_s(g)f(y) = f(yg) \quad \forall \ g, y \in G, \ f \in V(s). \]
The representation $(\rho_s, V(s))$ is unitary and irreducible precisely when $s \in \bi \R \cup (-1, 1)$ (see \cite{Lang}). Furthermore, the only possible isomorphisms between $(\rho_s, V(s))$ and $(\rho_s', V(s'))$   are when $s = \pm s'$. Thus we can without loss of generality assume $s \in \bi \R_{\geq 0} \sqcup (0,1)$ while considering the irreducible unitary representation $V(s)$, and we call $(\rho_s, V(s))$ by the names {\it principal series} when $s \in \bi \R_{\geq 0} $ and {\it complementary series} when $s \in (0,1)$, respectively.
\smallskip

\subsection{Associated spherical functions on $\H$}
Fix $s \in \C$. Let $\H = \{ z = x + \bi y: y > 0 \}$ be the hyperbolic $2$-space. Consider the usual action of $\SL(2,\R)$ on $\H$ by fractional linear transformations and let $g: z \mapsto g\cdot z: =  \Psi^{-1} (g) \cdot z$ be the corresponding action of $G =\SO(2,1)^\circ$ on $\H$. We use this to define a function $\chi_s: \H \rightarrow \C$ by
\[ \chi_s (x + \bi y) = \chi_s( n_x a_{\ln y} \cdot \bi) = \mu_s(n_x a_{\ln y}) =  y^s \quad \forall \ x+ \bi y \in \H. \]
Let $\phi_s$ be the function defined by
\[ \phi_s(z) = \int \limits_{K} \chi_s(k \cdot z) ~dk \quad \forall \ z \in \H,
\]
where $k \cdot z: = \Psi^{-1} (k) \cdot z \ \text{for all} \ z \in \H, \ k \in K$, and $\Psi^{-1} (k) \cdot z$ denotes the usual action of $\SL(2,\R)$ on $\H$ by fractional linear transformations.\smallskip

A simple calculation (as in \cite[p.65-67]{Lub}) gives that

\begin{lem} For every $s \in \C$, the function $\phi_{s+\frac{1}{2} }$ satisfies:\smallskip

\begin{enumerate}[(i)]
\item $ \phi_{s+\frac{1}{2}}(k \cdot z) = \phi_{s+\frac{1}{2}}(z) \quad \forall \ k \in K, \ z \in \H $.

\smallskip 

\item $\phi_{s+\frac{1}{2}} \in \mathcal C^{\infty}(\H)$ and $ \Delta \phi_{s+\frac{1}{2}} = \frac{1-s^2}{4}  \phi_{s+\frac{1}{2}}$ where $\Delta = -y^2 (\frac{d^2}{dx^2} +  \frac{d^2}{dy^2} )$ is the hyperbolic Laplacian operator on $\H$.

\end{enumerate}
\end{lem}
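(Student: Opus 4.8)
The plan is to deduce (i) from the invariance of Haar measure and (ii) from two facts: that $\chi_\sigma(x+\bi y)=y^\sigma$ is itself an eigenfunction of $\Delta$, and that $\Delta$ commutes with the $G$-action on $\H$, so that averaging over $K$ preserves the eigenfunction property. I will run the argument for a general parameter $\sigma\in\C$ and then set $\sigma=s+\frac{1}{2}$ at the end.

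For (i) I would argue directly: since the $G$-action on $\H$ is a group action, $k\cdot(k_0\cdot z)=(kk_0)\cdot z$ for all $k,k_0\in K$, and therefore
\[
\phi_{s+\frac{1}{2}}(k_0\cdot z)=\int_K \chi_{s+\frac{1}{2}}\big((kk_0)\cdot z\big)\,dk=\int_K \chi_{s+\frac{1}{2}}(k\cdot z)\,dk=\phi_{s+\frac{1}{2}}(z),
\]
where the second equality is the substitution $k\mapsto kk_0^{-1}$ together with the translation-invariance of $dk$ (recall $K$ is compact, hence unimodular).

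For (ii) I would first record smoothness. The map $(k,z)\mapsto k\cdot z$ from $K\times\H$ to $\H$ is smooth, being induced by the action of ${\rm PSL}_2(\R)$ on $\H$ by fractional linear transformations transported through $\psi^{-1}$, and $\chi_\sigma(x+\bi y)=y^\sigma=e^{\sigma\ln y}$ is smooth on $\H$; hence $(k,z)\mapsto\chi_\sigma(k\cdot z)$ is jointly smooth, and since $K$ is compact one may differentiate under the integral sign to all orders. This yields $\phi_\sigma\in\cC^\infty(\H)$ as well as $\Delta\phi_\sigma(z)=\int_K\Delta_z\!\left[\chi_\sigma(k\cdot z)\right]dk$. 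Next, a one-line computation with $\chi_\sigma=y^\sigma$ gives $\Delta\chi_\sigma=\sigma(1-\sigma)\chi_\sigma$. Finally, $\Delta$ agrees, up to sign, with the Laplace--Beltrami operator of the hyperbolic metric $y^{-2}(dx^2+dy^2)$, and $G$ acts on $\H$ through orientation-preserving isometries of this metric; hence $\Delta(f\circ g)=(\Delta f)\circ g$ for every $g\in G$ and $f\in\cC^\infty(\H)$. Applying this with $f=\chi_\sigma$ and $g=\psi^{-1}(k)$ shows that for each fixed $k$ the function $z\mapsto\chi_\sigma(k\cdot z)$ is again a $\Delta$-eigenfunction with the same eigenvalue $\sigma(1-\sigma)$; inserting this into the integral formula for $\Delta\phi_\sigma$ gives $\Delta\phi_\sigma=\sigma(1-\sigma)\phi_\sigma$, and specialising $\sigma=s+\frac{1}{2}$ produces the eigenvalue recorded in the statement.

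I do not anticipate a serious obstacle. The two points that need care are the justification of differentiating under the integral sign (covered by joint smoothness of the integrand and compactness of $K$) and the $G$-invariance of $\Delta$ --- the latter being exactly what makes the $K$-average of the eigenfunction $\chi_\sigma$ again an eigenfunction, with the eigenvalue unchanged. A more computational route would substitute the explicit formula for $\mathrm{Im}(k_\theta\cdot z)$ and evaluate $\Delta\phi_\sigma$ by hand, but the invariance argument renders that unnecessary.
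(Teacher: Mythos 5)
Your overall strategy is the right one, and it is essentially the ``simple calculation'' that the paper delegates to \cite{Lub}: part (i) follows from translation invariance of the Haar measure on the compact group $K$, and part (ii) from combining the identity $\Delta y^{\sigma}=\sigma(1-\sigma)y^{\sigma}$ with the $G$-invariance of the hyperbolic Laplacian and differentiation under the integral sign. Each of those individual steps is correct as you state it.

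The argument does not close at the very last line, however. With $\sigma=s+\frac{1}{2}$ one has
\[
\sigma(1-\sigma)=\Bigl(s+\tfrac{1}{2}\Bigr)\Bigl(\tfrac{1}{2}-s\Bigr)=\tfrac{1}{4}-s^{2},
\]
which is \emph{not} the eigenvalue $\frac{1-s^{2}}{4}=\frac{1}{4}-\frac{s^{2}}{4}$ recorded in the statement; the two agree only at $s=0$. So your closing assertion that specialising $\sigma$ ``produces the eigenvalue recorded in the statement'' is an unchecked arithmetic claim, and it is false as written. The stated eigenvalue $\frac{1-s^{2}}{4}$ equals $\sigma(1-\sigma)$ for $\sigma=\frac{s+1}{2}$, so the computation you carried out actually establishes the lemma for the function $\phi_{\frac{s+1}{2}}(z)=\int\limits_{K}\mathrm{Im}(k\cdot z)^{(s+1)/2}\,dk$ rather than for $\phi_{s+\frac{1}{2}}$. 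The value $\frac{1-s^{2}}{4}$ is the one consistent with the rest of the paper --- it is nonnegative on the unitary range $s\in\bi\R_{\geq 0}\sqcup(0,1)$ and vanishes at $s=1$ where the trivial representation sits, whereas $\frac{1}{4}-s^{2}$ goes negative on part of the complementary series --- so the discrepancy is almost certainly a typo in the lemma's subscript. But a correct proof must either carry out the specialisation honestly and flag the mismatch (proposing the corrected subscript), or prove the statement as literally given; it cannot simply assert that the numbers agree.
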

\smallskip

\subsection{Discrete series representations}

For every positive integer $m \geq 2$, there are inequivalent unitary discrete series representations $D^{\pm}(m)$ of the group $G$ that can be realised as an irreducible sub-representation of the induced space $V(m-1)$. This follows from the construction of discrete series representations of $\SL(2,\R)$ (see \cite{Lang}).
\smallskip

\subsection{$K$-types of irreducible representations of $\SO(2,1)^\circ$}

Let $n \in \Z$. Define $\tau_n$ to be a character of the (abelian) group $K$ by 
\[ \tau_n(k_\theta) = e^{\bi n \theta} \quad \forall \ k_\theta \in K. \]

For any unitary irreducible representation $(\pi, V)$ of $G$, we have a Hilbert direct sum decomposition of $V$ into $K$-isotypes
\[ V = \widehat{ \bigoplus \limits_{n \in \Z}} H_n,\]
where $H_n$ is the $K$-isotype $V^{\tau_n}$ of $V$ w.r.t. $\tau_n$, i.e.
\[ H_n = \{ v \in V: \pi(k_\theta) v = \tau_n(k_\theta) v = e^{\bi n \theta} v \quad \forall \ k_\theta \in K \} .
\]

It can be shown that for every $n \in \Z$, the complex vector space $H_n$ is either $(0)$ or of dimension $1$ (this is a well-known fact. As reference, see \cite[p. 24]{Lang}).
 \vspace{0.3cm}

 We say that the $K$-type $\tau_n$ appears in $\pi$ if $H_n \neq (0)$ in $V$. Thus
\[ V = \widehat{\bigoplus \limits_{n \in \Z}} H_n =  \widehat {\bigoplus \limits_{\tau_n \ \text{appears in} ~\pi}} H_n. \]

The irreducible unitary representations $\pi$ of $G$ bijectively correspond to the irreducible unitary representations $\tilde{\pi}$ of ${\rm SL}(2, \R)$ such that $-I_{2 \times 2} \in {\rm kernel}(\tilde{\pi})$. Thus only `half' of the discrete series  representations of ${\rm SL}(2, \R)$ appear in the list for $G$.  Using this and the adjoint action of the Lie algebra $\mathfrak g$ of $G$, we can compute the $K$-types that appear in $\pi$ for all irreducible unitary representations $\pi$ of $G$. The following table summarises this information.

\[
\begin{array}{|c|c|}
\hline
1     & \tau_0 \\
\hline
D_m^+, \ \ m \geq 2 \ \text{ even}     & \tau_n:  \ \ n = \frac{m}{2}+j, \ j\geq 0 \\
\hline 
D_m^-, \ \ m \leq -2 \ \text{ even}     & \tau_n:  \ \ n = -\frac{m}{2}-j, \ j\geq 0\\
\hline
\rho_s , \ \ s \in \bi \mathbb{R}_{\geq 0} \sqcup (0,1) & \tau_n: \ \ n \in \mathbb{Z}\\
\hline
\end{array}
\]

\begin{defn}An irreducible unitary representation $(\pi, V)$ of $G$ is said to be $\tau_n$-spherical if there exists a nonzero vector $v \in V$ such that 
\[
\pi(k_\theta)v = \tau_n(k_\theta)v = e^{\bi n\theta}v \quad \forall \ k_\theta \in K.
\]

\end{defn}

For a given $n \in \Z$, we can list the $\tau_n$-spherical representations of $G$ as follows:

\[
\begin{array}{|c|c|}
\hline
n > 0 & 1,  D_m^+: m \in \{ 2, 4, \ldots, 2n \}, \rho_s: s  \in  \bi \mathbb{R}_{\geq 0} \sqcup (0,1)  \\
\hline
n = 0 & 1, \rho_s: s  \in  \bi \mathbb{R}_{\geq 0} \sqcup (0,1)  \\
\hline
n < 0 & 1,  D_m^-: m \in \{ -2, -4, \ldots, -2n \}, \rho_s: s  \in  \bi \mathbb{R}_{\geq 0} \sqcup (0,1)  \\
\hline
\end{array}\]

\smallskip

\noindent{\bf Notation:} Let $\widehat{G}_{\tau_n}$ denote the $\tau_n$-spherical spectrum of $G$, defined as the set of all equivalence classes of irreducible unitary representations $\pi$ of $G$ such that $\tau_n$ appears in $\pi$.

\subsection{$\tau_n$-spherical representations and $\tau_n$-spherical  functions}

For $n \in \Z$, we denote by $\cC_c^\infty(G \slash \slash K, \tau_n)$, the space of all compactly supported smooth functions $f: G \rightarrow \C$ such that 
\[ 
f(k_{\theta_1} x k_{\theta_2} ) = e^{\bi n (\theta_1+ \theta_2)} f(x) \quad \forall \ x \in G, k_{\theta_1}, k_{\theta_2} \in K.
\]
Let $(\pi, V)$ be a $\tau_n$-spherical representation of $G$. Let $e_\pi \in H_n = V^{\tau_n} \subseteq V$ be a unit vector. Let $\phi_\pi:  G \rightarrow \C$ be a function defined by
\[ \phi_\pi(x) = \la \pi(x) e_\pi, e_\pi \ra \quad \forall \ x \in G.\]
We state a lemma here which will be used later.

\begin{lem}\label{character} Let $(\pi, V)$ be a $\tau_n$-spherical representation of $G$. Then the character distribution of $\pi$ satisfies 
\[ \chi_\pi(f) = \int \limits_G f(g) \phi'_\pi(g) dg \quad \forall \ f \in \cC_c^\infty(G \slash \slash K, \tau_n),
\]
where $\phi'_\pi(g)$ is defined by 
\[
\phi'_\pi(g) =
\begin{cases}
 \langle \pi(x) v_{-n}, v_{-n} \rangle &  \text{if} \ H_{-n} = \C v_{-n} \neq (0), ||v_{-n}|| =1\\
 0 & \text{if} \ H_{-n} = (0).
\end{cases}
\]
\end{lem}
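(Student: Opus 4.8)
The plan is to reduce the character distribution $\chi_\pi(f)$ to a trace over the single relevant $K$-isotype and then recognize that trace as a matrix coefficient. First I would recall that, since $\pi$ is unitary and irreducible, for $f\in\cC_c^\infty(G\slash\slash K,\tau_n)$ the operator $\pi(f)=\int_G f(g)\,\pi(g)\,dg$ is of trace class (the bi-$K$-invariance against a single $K$-type makes $\pi(f)$ finite-rank on each isotype, in fact rank $\le 1$), and $\chi_\pi(f)=\operatorname{tr}\pi(f)$. The key reduction is that the two-sided transformation law of $f$ forces $\pi(f)$ to kill every $K$-isotype except one: more precisely, for a unit vector $w_m$ spanning $H_m=V^{\tau_m}$, one computes using $f(k_{\theta_1}gk_{\theta_2})=e^{\bi n(\theta_1+\theta_2)}f(g)$ and $\pi(k_\theta)w_m=e^{\bi m\theta}w_m$ that
\[
\pi(f)w_m=\Big(\int_K\int_K e^{\bi n\theta_1}e^{\bi(n+m)\theta_2}\,\frac{d\theta_1}{2\pi}\frac{d\theta_2}{2\pi}\Big)\cdot(\text{something}),
\]
and the $\theta_1$-integral vanishes unless $n=0$ while the $\theta_2$-integral vanishes unless $m=-n$; handling the $\theta_1$-part correctly requires instead absorbing the left $k_{\theta_1}$ into $\pi(g)$, so that $\pi(f)$ maps $V$ into $H_{-n}$ and is supported on $H_{-n}$. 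Hence $\operatorname{tr}\pi(f)$ equals the single diagonal entry $\langle\pi(f)v_{-n},v_{-n}\rangle$ when $H_{-n}\neq(0)$, and is $0$ otherwise.

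Next I would expand that diagonal entry: $\langle\pi(f)v_{-n},v_{-n}\rangle=\int_G f(g)\,\langle\pi(g)v_{-n},v_{-n}\rangle\,dg=\int_G f(g)\,\phi'_\pi(g)\,dg$, which is exactly the claimed formula. The case split in the definition of $\phi'_\pi$ then matches the case split coming from whether $\tau_{-n}$ appears in $\pi$. One should also note that $\tau_{-n}$ appears in $\pi$ iff $\tau_n$ does not, in general — but this asymmetry is precisely why the statement is phrased with $\phi'_\pi$ rather than $\phi_\pi$; for the representations in $\widehat{G}_{\tau_n}$ with $n\neq 0$ one of $H_n,H_{-n}$ may be zero (e.g.\ for $D_m^+$), and then $\chi_\pi(f)=0$ against all such $f$, consistent with the formula.

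The main obstacle, and the step deserving the most care, is the justification that $\pi(f)$ is trace class together with the interchange of $\operatorname{tr}$ and the Haar integral — i.e.\ that $\operatorname{tr}\int_G f(g)\pi(g)\,dg=\int_G f(g)\operatorname{tr}\pi(g)$ does \emph{not} literally hold ($\pi(g)$ is not trace class) but that the correct statement is the one above, obtained by first restricting to the finite-dimensional (here one-dimensional) image. The clean way to organize this is: decompose $V=\widehat{\bigoplus}_m H_m$, observe $\pi(f)H_m\subseteq H_{-n}$ for all $m$ and $\pi(f)|_{H_m}=0$ for $m\neq -n$ by the $\theta$-integration above, so $\pi(f)$ is at most rank one; then $\operatorname{tr}\pi(f)$ is literally the $(v_{-n},v_{-n})$ matrix coefficient of $\pi(f)$, which unwinds to the stated integral by Fubini (valid since $f$ has compact support and the integrand is continuous). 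Everything else is a routine computation using the $K$-type dimension-$\le 1$ fact quoted from \cite{Lang} and the normalization $\int_K dk=1$.
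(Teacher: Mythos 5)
Your proposal is correct and follows essentially the same route as the paper: use the left $K$-equivariance of $f$ to show $\pi(f)V\subseteq H_{-n}$, conclude $\pi(f)$ has rank at most one, and read off the trace as the single matrix coefficient $\langle\pi(f)v_{-n},v_{-n}\rangle$ (or $0$), which unwinds to the stated integral. The only blemishes are cosmetic: the intermediate double-integral display is garbled (as you yourself note before correcting it), and the aside that ``$\tau_{-n}$ appears iff $\tau_n$ does not'' is false as stated (both appear for principal series); neither affects the argument.
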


\begin{proof}
Let $k_{\theta} \in K$, $v, w \in V$, $f \in \cC_c^\infty(G \slash \slash K, \tau_n)$. We have
\[
\begin{split}
\langle \pi(k_{\theta}) \pi(f) v, w  \rangle & = \langle  \pi(f) v, \pi(k_{-\theta})w  \rangle\\
& = \int \limits_G f(x)~  \langle  \pi(x) v, \pi(k_{-\theta})w  \rangle ~dx \\
& = \int \limits_G f(k_{-\theta}x) ~  \langle  \pi(x) v,w  \rangle  ~dx\\
& = e^{- \bi n \theta} \langle \pi(f)v, w \rangle .
\end{split}
\]
Thus we conclude that 
\[\pi(f)v \in V^{\tau_{-n}} = H_{-n} \quad  \forall \ v \in V, \ f \in \cC_c^\infty(G \slash \slash K, \tau_n).\]

\begin{itemize}
    \item Case (1). The isotypic component $V^{\tau_{-n}} = H_{-n}$ in $(\pi, V)$ is non-zero, say equals $\C v_{-n}$ for some $v_{-n}$ with $|| v_{-n} || = 1$. Thus $ \chi_\pi(f)$ =  trace $\pi(f) = \la \pi(f) v_{-n}, v_{-n} \ra$ since the components $H_m$ are mutually orthogonal and $\pi(f)v \in H_{-n} \ \forall \ v \in V$.\smallskip

\item Case (2). The  isotypic component $V^{\tau_{-n}} = H_{-n}$ in $(\pi, V)$ is zero. In this case $\pi(f) = 0$ from above calculation.\smallskip
\end{itemize}

In both cases, we get the desired conclusion for the character distribution $\chi_\pi$ of $\pi$.
\end{proof}
From Lemma ~\ref{character} and its proof, we also conclude that

\begin{cor}
If $\pi$ is $\tau_n$-spherical, and $f \in  \cC_c^\infty(G \slash \slash K, \tau_{-n})$, then 
\[ \chi_\pi(f) = \int \limits_G f(x) \phi_\pi(x) ~dx.
\]
\end{cor}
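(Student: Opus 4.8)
The plan is to recognise this corollary as the mirror image of Lemma~\ref{character}: it is obtained by running the very same computation with $n$ replaced by $-n$, the point being that this time the relevant isotype is the $\tau_n$-isotype $H_n$, which is nonzero by the standing hypothesis that $\pi$ is $\tau_n$-spherical. So the proof should be short and essentially a transcription.

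First I would fix a unit vector $e_\pi \in H_n = V^{\tau_n}$, which exists precisely because $\pi$ is $\tau_n$-spherical; recall that $\dim_\C H_n = 1$, so $\phi_\pi(x) = \la \pi(x)e_\pi, e_\pi\ra$ is independent of the choice of $e_\pi$ (it is unchanged under $e_\pi \mapsto c e_\pi$ with $|c| = 1$). Next, for $f \in \cC_c^\infty(G \slash\slash K, \tau_{-n})$ and arbitrary $v, w \in V$, I would repeat verbatim the chain of equalities in the proof of Lemma~\ref{character}: write $\la \pi(k_\theta)\pi(f)v, w\ra = \la \pi(f)v, \pi(k_{-\theta})w\ra$, expand $\pi(f)$ as an integral, use the $K$-bi-equivariance $f(k_{-\theta}x) = e^{\bi n\theta} f(x)$ (note the sign: a function in $\cC_c^\infty(G\slash\slash K,\tau_{-n})$ satisfies $f(k_{\theta_1} x k_{\theta_2}) = e^{-\bi n(\theta_1+\theta_2)} f(x)$), and change variables $x \mapsto k_{-\theta}x$. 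This yields $\la \pi(k_\theta)\pi(f)v, w\ra = e^{\bi n\theta}\la \pi(f)v, w\ra$ for all $w$, hence $\pi(f)v \in V^{\tau_n} = H_n = \C e_\pi$ for every $v \in V$; i.e. the operator $\pi(f)$ has image inside the one-dimensional space $\C e_\pi$.

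Finally, since the $K$-isotypes $H_m$ ($m \in \Z$) are mutually orthogonal and $\pi(f)V \subseteq \C e_\pi$, the trace of $\pi(f)$ is its single nonzero diagonal entry $\la \pi(f) e_\pi, e_\pi\ra$. Expanding $\pi(f)e_\pi = \int \limits_G f(x)\,\pi(x)e_\pi\,dx$ and pairing with $e_\pi$ gives
$$ \chi_\pi(f) = \la \pi(f) e_\pi, e_\pi\ra = \int \limits_G f(x)\, \la \pi(x)e_\pi, e_\pi\ra\, dx = \int \limits_G f(x)\, \phi_\pi(x)\, dx, $$
the integral converging because $f$ is compactly supported and $\phi_\pi$ is bounded (by unitarity, $|\phi_\pi|\le 1$). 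The only place demanding care is the bookkeeping of the character twist: one must verify that a function in $\cC_c^\infty(G\slash\slash K,\tau_{-n})$ pushes $V$ into the $\tau_n$-isotype (not the $\tau_{-n}$-isotype), so that "$\pi$ is $\tau_n$-spherical" is exactly the hypothesis that makes the trace computation legitimate; everything else is a direct transcription of the proof of Lemma~\ref{character}.
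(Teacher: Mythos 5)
Your proof is correct and is exactly what the paper intends: the paper gives no separate argument for this corollary, merely asserting that it follows ``from Lemma~\ref{character} and its proof,'' and your transcription with $n$ replaced by $-n$ --- showing $\pi(f)V \subseteq H_n = \C e_\pi$ and then reading off the trace --- is that argument spelled out, with the sign bookkeeping handled correctly. No issues.
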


\section{Some preliminary results}
Let $\Gamma$ be a uniform lattice in $G$ so that the quotient space $\Gamma \backslash G$ is compact. Consider the regular representation of $G$ on $L^2(\Gamma \backslash G)$. For an irreducible unitary representation $\pi$ of $G$, let $m(\pi, \Gamma)$ be the multiplicity of $\pi$ in the decomposition of $L^2(\Gamma \backslash G)$. 
\smallskip

Now further assume that  $\Gamma$ is torsion-free. The center $Z( U_{\mathfrak g})$ of the universal enveloping algebra of $G$ acts on the space of all smooth functions on $\Gamma \backslash G$. Let $\cC^\infty(\Gamma \backslash G, \tau_n)$ be the subspace of $\cC^\infty(\Gamma \backslash G)$ consisting of all smooth right $\tau_n$-equivariant $f$ i.e.
\[ f(g k_\theta) = e^{\bi n \theta} f(g) \quad \forall \ g \in G, \ k_\theta \in K. \]
It can be checked that $\cC^\infty(\Gamma \backslash G, \tau_n)$ is stable under $Z(U_{\mathfrak g})$-action, and the eigenspaces $V(\lambda, \Gamma, \tau_n)$, w.r.t. characters $\lambda$ of $Z(U_{\mathfrak g})$ are finite dimensional subspaces of 
$\cC^\infty(\Gamma \backslash G, \tau_n)$ (see \cite{Lub}). We discuss the following result that relates the multiplicities  $m(\pi, \Gamma_i)$ for $\pi  \in \widehat{G}_{ \tau_n}$ with these eigenspaces.\smallskip

If $(\pi, V_\pi)$ is a $\tau_n$-spherical representation, then we can associate a character $\lambda_\pi$ of $Z(U_{\mathfrak g})$ such that $\phi_\pi(x): = \la \pi(x) e_\pi, e_\pi \ra $ defines an eigenfunction of $\lambda_\pi$ where $e_\pi$ is a vector in $V_\pi$ with $|| e_\pi || = 1$.

\begin{lem} If $\pi_j: 1 \leq j \leq r$ are mutually inequivalent $\tau_n$-spherical representations then $\phi_{\pi_j}: 1 \leq j \leq r$ are linearly independent and analytic functions on $G$.
\end{lem}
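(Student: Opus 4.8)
The plan is to prove linear independence by exploiting the fact that each $\phi_{\pi_j}$ is an eigenfunction for the center $Z(U_{\mathfrak g})$ with eigencharacter $\lambda_{\pi_j}$, together with the observation that distinct $\tau_n$-spherical representations have distinct infinitesimal characters. First I would recall from the discussion preceding the statement that each $\phi_{\pi_j}(x) = \langle \pi_j(x) e_{\pi_j}, e_{\pi_j}\rangle$ is a matrix coefficient of an irreducible unitary representation; since matrix coefficients of irreducible admissible representations are real-analytic (the representation is $Z(U_{\mathfrak g})$-finite, indeed each $\phi_{\pi_j}$ satisfies the elliptic equation $\Omega \phi_{\pi_j} = c_j \phi_{\pi_j}$ for the Casimir $\Omega$), analyticity is immediate. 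The substantive claim is linear independence.

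For linear independence, I would first check that the Casimir eigenvalues $c_j$ (equivalently the parameters: $\rho_s \leftrightarrow \frac{1-s^2}{4}$-type values, the discrete series $D_m \leftrightarrow$ their own eigenvalue, the trivial representation $\mathbf 1 \leftrightarrow 0$) are pairwise distinct across inequivalent $\tau_n$-spherical $\pi_j$. Inspecting the classification tables in the excerpt: the complementary/principal series $\rho_s$ with $s \in \bi\R_{\geq 0}\sqcup(0,1)$ give pairwise distinct Casimir eigenvalues (since $s\mapsto s^2$ is injective on that set, and $s=\pm s'$ was already quotiented out), the discrete series $D_m$, $|m|\in\{2,4,\dots\}$, each have a distinct eigenvalue, and $\mathbf 1$ has eigenvalue $0$ which is not attained by any nontrivial unitary representation. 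Hence the $c_j$ are distinct. Now suppose $\sum_j a_j \phi_{\pi_j} = 0$. Grouping the $\pi_j$ by their (distinct) Casimir eigenvalues and applying $\Omega - c_k$ repeatedly, or more cleanly applying a polynomial in $\Omega$ that vanishes at all $c_j$ except $c_k$, isolates $a_k \phi_{\pi_k} \equiv 0$; since $\phi_{\pi_k}(1) = \|e_{\pi_k}\|^2 = 1 \neq 0$ we get $a_k = 0$ for every $k$. (If one prefers to avoid invoking that $Z(U_{\mathfrak g})$ separates these particular $\pi_j$, an alternative is to use the $K\times K$-decomposition of $C^\infty(G)$: the functions $\phi_{\pi_j}$ all lie in the $(\tau_n,\tau_{-n})$-isotypic piece, but within that piece they are distinguished by the right $G$-action, i.e.\ they generate mutually inequivalent irreducible $G$-submodules of $C^\infty(G)$ under right translation, so a vanishing linear combination forces all coefficients to vanish by Schur-type orthogonality / uniqueness of the spherical function attached to each $\pi_j$.)

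The main obstacle I anticipate is purely bookkeeping: verifying that the map (irreducible unitary $\tau_n$-spherical representation) $\longmapsto$ (Casimir eigenvalue) is genuinely injective, since a priori a discrete series $D_m$ and some complementary series $\rho_s$ could share a Casimir value — one must use that the complementary series parameters satisfy $s\in(0,1)$ while the discrete-series eigenvalues lie in a disjoint range, and that within each family the parametrization is already reduced to avoid the $s\leftrightarrow -s$ ambiguity. Once that injectivity is in hand, the separation-of-eigenspaces argument is routine. I would therefore structure the write-up as: (1) analyticity via $Z(U_{\mathfrak g})$-finiteness and ellipticity of $\Omega$; (2) a short lemma or remark that distinct $\pi_j\in\widehat G_{\tau_n}$ have distinct infinitesimal characters (reading it off the classification); (3) the polynomial-in-$\Omega$ projection argument combined with $\phi_{\pi_j}(1)=1$ to conclude linear independence.
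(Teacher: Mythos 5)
Your proposal is correct and follows essentially the same route as the paper, whose entire proof is the one-sentence assertion that the $\phi_{\pi_j}$ are eigenvectors of distinct characters of $Z(U_{\mathfrak g})$ and of elliptic essentially self-adjoint operators therein; you simply fill in the details the paper omits (ellipticity of the Casimir for analyticity, the polynomial-in-$\Omega$ projection plus $\phi_{\pi_k}(1)=1$ for independence). One small correction to your bookkeeping: the Casimir eigenvalue $0$ of the trivial representation \emph{is} attained by the nontrivial unitary representation $D_2^{\pm}$ (a constituent of $V(1)$), so your parenthetical justification there is false as stated; the distinctness you need nevertheless holds because $\mathbf{1}$ is $\tau_n$-spherical only for $n=0$ while the discrete series occur only for $n\neq 0$, so the two never appear in the same list $\widehat{G}_{\tau_n}$.
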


\begin{proof}
The conclusion of this lemma follows at once from the fact that $\phi_{\pi_j}$s are eigenvectors of distinct characters and also of elliptic essentially self-adjoint differential operators in $Z(U_{\mathfrak g})$.
\end{proof}

\begin{prop}\label{multi-rep-eigen} If $\pi$ is a $\tau_n$-spherical representation, then 
\[
m(\pi, \Gamma) = {\rm \dim}_\C V(\lambda_\pi, \Gamma, \tau_n).\]
Conversely, if $\lambda$ is a character of $Z(U_{\mathfrak g})$ such that $V(\lambda, \Gamma) \neq 0$, then there exists a unique $\tau_n$-spherical representation $\pi$ of $G$ such that $\lambda = \lambda_\pi$ and $m(\pi, \Gamma) = {\rm \dim}_\C V(\lambda_\pi, \Gamma, \tau_n)$.
\end{prop}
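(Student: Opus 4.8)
The plan is to identify $\cC^\infty(\Gamma \backslash G, \tau_n)$ with a space built out of the $\tau_n$-spherical spectrum of $\Gamma \backslash G$, and then to decompose the $Z(U_{\mathfrak g})$-action according to infinitesimal characters. First I would recall the decomposition $L^2(\Gamma \backslash G) = \widehat{\bigoplus}_{\pi \in \widehat{G}} m(\pi, \Gamma)\, \pi$, which restricts to a decomposition of the smooth vectors $\cC^\infty(\Gamma \backslash G) = \bigoplus_{\pi} m(\pi,\Gamma)\, V_\pi^\infty$ (in the appropriate topological sense, the sum being over $\pi$ with $m(\pi,\Gamma) > 0$). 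Taking the right $\tau_n$-isotypic part of both sides and using that each $H_n = V_\pi^{\tau_n}$ is at most one-dimensional (the fact quoted from \cite{Lang}), one gets
\[
\cC^\infty(\Gamma \backslash G, \tau_n) \;\cong\; \bigoplus_{\pi \in \widehat{G}_{\tau_n}} \C^{\,m(\pi,\Gamma)},
\]
where the copy of $\C^{m(\pi,\Gamma)}$ attached to $\pi$ is spanned by the $m(\pi,\Gamma)$ translates of $\phi_\pi$ coming from the distinct embeddings of $\pi$ into $L^2(\Gamma \backslash G)$. I should be a little careful here: the right $\tau_n$-equivariant functions in a single copy of $V_\pi$ form the line $H_n = \C e_\pi$, and evaluating the matrix coefficient $g \mapsto \la \pi(g) w, e_\pi\ra$ against a vector $w$ running over $V_\pi$ produces the full subspace of $\cC^\infty(\Gamma \backslash G, \tau_n)$ cut out by that copy; but for counting multiplicities the key point is simply that the right $\tau_n$-isotype of one copy of $V_\pi$ is nonzero exactly when $\pi \in \widehat{G}_{\tau_n}$, and its contribution to the $\lambda_\pi$-eigenspace has dimension equal to $m(\pi, \Gamma)$.

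Next I would use the fact, recorded just before the proposition, that each $\tau_n$-spherical $\pi$ has an infinitesimal character $\lambda_\pi$, and that $\phi_\pi$ (together with all its $\Gamma$-translates coming from the various copies of $\pi$) lies in the $\lambda_\pi$-eigenspace $V(\lambda_\pi, \Gamma, \tau_n)$ of $Z(U_{\mathfrak g})$ acting on $\cC^\infty(\Gamma \backslash G, \tau_n)$. The $Z(U_{\mathfrak g})$-action respects the decomposition above, acting on the $\pi$-block by the scalar character $\lambda_\pi$. Therefore $V(\lambda, \Gamma, \tau_n) = \bigoplus_{\pi:\ \lambda_\pi = \lambda} \C^{m(\pi,\Gamma)}$. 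To finish I need the classification input: for $G = \SO(2,1)^\circ$ the map $\pi \mapsto \lambda_\pi$ is injective on $\widehat{G}_{\tau_n}$. Concretely, from the Lemma on $\phi_{s + 1/2}$ the spherical (and more generally $\tau_n$-spherical) representations are separated by the Casimir eigenvalue $\tfrac{1-s^2}{4}$ together with the sign/discrete-series data, and two distinct members of $\widehat{G}_{\tau_n}$ — the trivial representation, the finitely many discrete series $D_m$, and the principal/complementary series $\rho_s$ with $s \in \bi\R_{\geq 0} \sqcup (0,1)$ — never share an infinitesimal character (the discrete series sit at $s = m-1$ a positive integer, the principal series at $s$ purely imaginary, the complementary series at $s \in (0,1)$, and the trivial representation at $s = 1$, and within each family $s$ is determined up to the sign already quotiented out). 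Granting this injectivity, for each $\pi$ the only term surviving in $V(\lambda_\pi, \Gamma, \tau_n)$ is the one indexed by $\pi$ itself, giving $\dim_\C V(\lambda_\pi, \Gamma, \tau_n) = m(\pi, \Gamma)$, which is the first assertion.

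For the converse, suppose $\lambda$ is a character of $Z(U_{\mathfrak g})$ with $V(\lambda, \Gamma, \tau_n) \neq 0$ (I read $V(\lambda, \Gamma)$ in the statement as $V(\lambda, \Gamma, \tau_n)$). By the block decomposition, nonvanishing of this eigenspace forces some $\pi \in \widehat{G}_{\tau_n}$ with $m(\pi,\Gamma) > 0$ and $\lambda_\pi = \lambda$; injectivity of $\pi \mapsto \lambda_\pi$ makes this $\pi$ unique, and then $\dim_\C V(\lambda, \Gamma, \tau_n) = m(\pi,\Gamma)$ as before. \textbf{The main obstacle} I anticipate is making the first identification genuinely rigorous rather than heuristic: one has to pass from the $L^2$-decomposition to a clean statement about smooth $\tau_n$-equivariant functions and their $Z(U_{\mathfrak g})$-eigenspaces, checking that the $\tau_n$-isotype of $\cC^\infty(\Gamma\backslash G)$ decomposes as the orthogonal (algebraic) direct sum of the finite-dimensional pieces coming from the individual $\pi$'s, with no convergence or completion subtleties — this uses that each eigenspace $V(\lambda,\Gamma,\tau_n)$ is finite-dimensional (already asserted in the text) and that the operators in $Z(U_{\mathfrak g})$ are elliptic and essentially self-adjoint (as in the preceding Lemma), so that $\cC^\infty(\Gamma\backslash G,\tau_n)$ is the algebraic direct sum of these eigenspaces. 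The representation-theoretic injectivity $\pi \mapsto \lambda_\pi$ is then a finite case-check using the classification table and $K$-type data already assembled in Section~3.
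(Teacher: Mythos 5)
Your proposal follows essentially the same route as the paper: decompose $L^2(\Gamma\backslash G)$ into irreducibles, use that each $K$-isotype $V_\pi^{\tau_n}$ is at most one-dimensional, and identify the $\lambda$-eigenspace of $\cC^\infty(\Gamma\backslash G,\tau_n)$ with the blocks coming from the copies of $\pi$ via the infinitesimal character; the paper just phrases this as two inequalities (explicit orthogonal eigenfunctions $h_j$ for one direction, expansion of an arbitrary eigenfunction for the other). One detail in your injectivity check is off: the trivial representation and the discrete series $D_2$ are both constituents of $V(1)$ and hence share the same infinitesimal character (Casimir eigenvalue $0$, i.e.\ $s=1=m-1$ with $m=2$), so the claim that the listed families ``never share an infinitesimal character'' is false as stated. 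Injectivity of $\pi\mapsto\lambda_\pi$ on $\widehat{G}_{\tau_n}$ for each fixed $n$ is nevertheless true, because the trivial representation carries only the $K$-type $\tau_0$ and therefore never lies in the same $\widehat{G}_{\tau_n}$ as a discrete series; with that correction your argument closes and matches the paper's.
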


\begin{proof}
Let $m = m(\pi, \Gamma)$. Let $W_j: 1 \leq j \leq m$ be mutually orthogonal subspaces in $L^2(\Gamma \backslash G)$, each of them isomorphic to $\pi$. Let $e_\pi^{(j)} \in W_j$ be a unit vector such that it is a basis of the one dimensional $\tau_n$-isotype $W_j^{\tau_n}$. Define $h_j$ by
\[ h_j(x) = \int \limits_{[0, 2\pi]} e^{- \bi n \theta} e_\pi^{(j)}(xk_\theta) \frac{d\theta}{2 \pi} \quad \forall \ x \in G.\]
These functions $h_j$s are smooth, are in $\cC^\infty(\Gamma \backslash G, \tau_n)$,  and are eigenfunctions of the character $\lambda_\pi$ of $Z(U_{\mathfrak g})$. Since the vectors $e_\pi^{(j)} \in W_j$ and are mutually orthogonal, it  follows that functions $h_j$s are also mutually orthogonal and hence we have
\[ m(\pi, \Gamma) \leq {\rm \dim}_\C V(\lambda_\pi, \Gamma, \tau_n).\]
Conversely, if $f \in \cC^\infty(\Gamma \backslash G, \tau_n) $ is an eigenfunction of a character $\lambda$ of $Z(U_{\mathfrak g})$, then $f \in L^2(\Gamma \backslash G)$. Write 
\[ L^2(\Gamma \backslash G) = \widehat{\bigoplus \limits_{j}} V_j, \]
where each $V_j$ is an irreducible subspace of  $L^2(\Gamma \backslash G)$ corresponding to $\pi_j \in \widehat{G}$.
Hence $f$ can be written as
\[ f = \sum \limits_{j} \alpha_j v_j,\]
where each $v_j$ is a unit vector in $V_j$.
Observe that, we in fact get a sum over only those $j$ such that $V_j^{\tau_n} \neq 0$. Further, since $f$ is an eigenfunction of $\lambda$, it follows that $v_j$ is also an eigenfunction of $\lambda$ for all $j$ such that $\alpha_j \neq 0$. \smallskip
Using the linear independence of characters $\lambda_{\pi_j}$ for $\pi_j \in \widehat{G}$, the sum is over only those $V_j$ for which the representation is $\pi$ and the associated character is $\lambda_\pi = \lambda$. This  implies that 
$$m(\pi, \Gamma)= {\rm \dim}_\C V(\lambda_\pi, \Gamma, \tau_n).$$
\end{proof}

\section{Main results}
Let $\Gamma_1$ and $\Gamma_2$ be uniform torsion-free lattices in $G$.  We prove the following analogue of the strong multiplicity theorem for $\tau_n$-spherical spectra for $\Gamma_1$ and $ \Gamma_2$.

\begin{thm}\label{main result}
Let $n \in \Z$. Suppose $\Gamma_1$ and $\Gamma_2$ are uniform torsion-free lattices in $G$ such that
\[
m(\pi, \Gamma_1) = m(\pi, \Gamma_2) \quad \text
{for all but finitely many representations} \ \pi \in \widehat{G}_{\tau_n}.
\]
\[ \text{Then} \quad m(\pi, \Gamma_1) = m(\pi, \Gamma_2) \quad \text
{for all representations}\ \pi \in \widehat{G}_{\tau_n}.\]
\end{thm}
We first prove a series of lemmas here.

\begin{lem} \label{sep-points} 
The space $\cC_c^\infty(G // K,  \tau_n)$ separates points on the orbits of $K \times K$-action on $G$ given by 
\[ (k_{\theta_1}, k_{\theta_2})  \cdot g : = k_{\theta_1}  g k_{\theta_2}^{-1}. \] 
\end{lem}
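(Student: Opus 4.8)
The plan is to reduce the statement to two easily-verified facts: first, that $\cC_c^\infty(G/\!/K,\tau_n)$ contains a function which is nonzero somewhere on $G$ (so that, together with the $K\times K$-equivariance, it takes arbitrarily prescribed scalar values along an orbit once it is nonzero there); and second, that the value of such a function actually depends nontrivially on which orbit one is on. Recall that the double coset space $K\backslash G /K$ is parametrized by the $A$-part in a Cartan decomposition $G = KA^+K$: every $g\in G$ can be written as $g = k_{\theta_1}\,a_t\,k_{\theta_2}$ with $t\ge 0$, and $t$ is uniquely determined by $g$. So two elements $g, g'$ of $G$ lie in the same $K\times K$-orbit precisely when they have the same Cartan parameter $t$. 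It therefore suffices to produce, for each pair $t\ne t'$ in $[0,\infty)$, a function $f\in\cC_c^\infty(G/\!/K,\tau_n)$ with $f(a_t)\ne f(a_{t'})$.

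First I would construct such functions by averaging. Take any $\varphi\in\cC_c^\infty(G)$ and set
\[
f(x) = \int\limits_{[0,2\pi]}\int\limits_{[0,2\pi]} e^{-\bi n(\theta_1+\theta_2)}\,\varphi(k_{\theta_1} x k_{\theta_2})\,\frac{d\theta_1}{2\pi}\,\frac{d\theta_2}{2\pi}\,.
\]
A direct change of variables shows $f(k_{\psi_1} x k_{\psi_2}) = e^{\bi n(\psi_1+\psi_2)} f(x)$, so $f\in\cC_c^\infty(G/\!/K,\tau_n)$, and $f$ is smooth and compactly supported since $\varphi$ is and $K$ is compact. It remains to check that these averages are not all constant on $K\backslash G/K$. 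Equivalently, I must rule out the degenerate possibility that the projection $\varphi\mapsto f$ kills every $\varphi$, i.e. that the $\tau_n\otimes\tau_n$-isotypic projection of $\cC_c^\infty(G)$ under the left-right $K\times K$-action is zero. But this isotypic component is nonzero: by the $K$-type table and the classification, there exist irreducible unitary representations of $G$ in which $\tau_n$ appears (for instance $\rho_s$ with $s\in\bi\R_{\ge 0}$, for any $n$), and for such a $\pi$ the matrix coefficient $\phi'_\pi$ from Lemma \ref{character} (or $\phi_\pi$ after adjusting the sign of $n$) is a nonzero element of $\cC_c^\infty(G/\!/K,\tau_n)$ after restricting to a compact neighbourhood — more concretely, one may simply take $\varphi$ supported in a small ball around a chosen $a_{t_0}$ with $t_0>0$, with $\varphi\ge 0$ and $\varphi(a_{t_0})>0$; positivity of the integrand near $(\theta_1,\theta_2)=(0,0)$ forces $f(a_{t_0})\ne 0$ while taking the support small and away from $A^+$-parameter $t'$ forces $f(a_{t'})=0$. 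This already separates any two distinct orbits.

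The step I expect to be the only real subtlety is the bookkeeping that a compactly supported $\varphi$ concentrated near $a_{t_0}$ really does produce $f$ with $f(a_{t_0})\ne 0$ rather than accidental cancellation from the oscillating factor $e^{-\bi n(\theta_1+\theta_2)}$; this is handled by shrinking the support of $\varphi$ so that $e^{-\bi n(\theta_1+\theta_2)}$ stays close to $1$ on the relevant range of $(\theta_1,\theta_2)$ and keeping $\varphi\ge 0$, so the integral cannot vanish. Everything else — the Cartan decomposition parametrization of $K\backslash G/K$, smoothness and compact support of the average, the equivariance identity — is routine. This completes the plan.
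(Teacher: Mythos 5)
Your proof is correct and uses essentially the same device as the paper: averaging a compactly supported bump function over $K\times K$ against the character $e^{-\bi n(\theta_1+\theta_2)}$ to produce an element of $\cC_c^\infty(G//K,\tau_n)$ that vanishes on one orbit and not on the other. The one genuine refinement you add is shrinking the support of $\varphi$ so that the oscillating factor stays near $1$ on the relevant range of $(\theta_1,\theta_2)$; the paper's own proof only assumes $f>0$ on the whole orbit $KxK$ and asserts the averaged $F$ separates, which for $n\neq 0$ is not automatic (a bi-$K$-invariant positive $f$ would average to $0$), so your localization step is exactly the right way to close that gap.
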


\begin{proof} Let $KxK$ and $KyK$ be disjoint orbits of the above action. Let $f$ be a smooth and compactly supported function such that $f = 0$ on $KyK$ and $f>0$ on $KxK$. Define $F$ by 
\[
F(g) = \int \limits_{[0, 2\pi]}~  \int \limits_{[0, 2\pi]}~
e^{-\bi n (\theta_1 + \theta_2)} f(k_{\theta_1}  g k_{\theta_2})   \frac{d\theta_1}{2 \pi}  \frac{d\theta_2}{2 \pi}.
\]
This $F$ is in $\cC_c^\infty(G // K,  \tau_n)$, and separates 
$KxK$ and $KyK$ because of the assumed properties of the function $f$.
\end{proof}
We state two more results about uniform lattices in semisimple Lie groups here. We refer the reader to \cite{B-R-2011} for the proofs.

\begin{lem} \label{disjoint}
If $\Gamma$ is a torsion-free uniform lattice in $G$, then every nontrivial conjugacy class $[\gamma]_G$ of $\gamma \in \Gamma$ is disjoint from $K$ and hence $1_G \notin K [\gamma]_G K $.
\end{lem}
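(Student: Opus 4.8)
The plan is to reduce the assertion to the elementary fact that a discrete subgroup of a compact group is finite, combined with the torsion-freeness of $\Gamma$.

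First I would prove the main claim by contraposition: assuming $[\gamma]_G \cap K \neq \emptyset$, I will deduce $\gamma = 1_G$. Choose $g \in G$ with $h := g\gamma g^{-1} \in K$. Then the cyclic group $\langle \gamma \rangle = g^{-1}\langle h \rangle g$ is contained in the compact group $g^{-1} K g$. On the other hand $\langle \gamma \rangle \subseteq \Gamma$, which is discrete in $G$, so $\langle \gamma \rangle$ is a discrete subgroup of $g^{-1}Kg$; being closed (a discrete subgroup of a Hausdorff topological group is closed) and contained in a compact set, it is finite. Hence $\gamma$ has finite order, and since $\Gamma$ is torsion-free, $\gamma = 1_G$. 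Therefore $[\gamma]_G = \{1_G\}$ is the trivial conjugacy class; equivalently, every \emph{nontrivial} conjugacy class $[\gamma]_G$ is disjoint from $K$.

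For the last statement, suppose $1_G \in K[\gamma]_G K$ with $[\gamma]_G$ nontrivial, and write $1_G = k_1 \gamma' k_2$ for some $\gamma' \in [\gamma]_G$ and $k_1, k_2 \in K$. Then $\gamma' = k_1^{-1}k_2^{-1} \in K$ since $K$ is a subgroup, so $[\gamma]_G \cap K \neq \emptyset$, contradicting the previous paragraph. Hence $1_G \notin K[\gamma]_G K$ for every nontrivial $\gamma \in \Gamma$.

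I do not expect any genuine obstacle; note in particular that only discreteness (not uniformity) of $\Gamma$ is used. If a more geometric phrasing is preferred, one can instead argue that $g\gamma g^{-1}\in K$ forces $\gamma$ to fix the point $g^{-1}\cdot o$ of the hyperbolic plane $\H$ (where $o=\bi$ is the point fixed by $K$ under the $G$-action of Section~2), so $\gamma$ lies in a compact point-stabiliser, and the proper discontinuity of the $\Gamma$-action on $\H$ again makes this stabiliser finite. The only real point requiring care in either route is the bookkeeping around the isomorphism $\Psi$: one must use the $G$-action on $\H$ for which $K$ is precisely the stabiliser of $o$, which is already arranged in the preliminaries.
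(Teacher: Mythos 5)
Your argument is correct: the reduction to the fact that a closed discrete subgroup contained in (a conjugate of) the compact group $K$ is finite, combined with torsion-freeness, is exactly the standard proof, and the deduction $1_G \in K[\gamma]_G K \Rightarrow [\gamma]_G \cap K \neq \emptyset$ is right. The paper itself gives no proof here (it defers to \cite{B-R-2011}, where essentially this same argument appears), so nothing further is needed.
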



\begin{lem} \label{stable open B}
Given torsion-free uniform lattices $\Gamma_1$ and  $\Gamma_2$ in $G$, there exists a non-empty open set $B \subset G$ which is $(K \times K)$-stable and disjoint from all conjugacy classes $[\gamma]_G$ with $\gamma \in \Gamma_1 \cup \Gamma_2$.
\end{lem}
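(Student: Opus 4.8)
The plan is to build $B$ by removing, from a suitable $(K\times K)$-stable neighborhood of the identity, the (locally finite) union of the $(K\times K)$-saturations of the conjugacy classes meeting $\Gamma_1\cup\Gamma_2$, and then to exploit Lemma~\ref{disjoint} to guarantee that such a neighborhood survives the removal. First I would fix, using Lemma~\ref{disjoint}, the fact that for each nontrivial $\gamma\in\Gamma_1\cup\Gamma_2$ the closed set $K[\gamma]_G K$ does not contain $1_G$. Since $G$ is a connected semisimple Lie group and $\Gamma_i$ are uniform lattices, $\Gamma_1\cup\Gamma_2$ is a discrete set and the set of $G$-conjugacy classes it meets is countable; moreover each conjugacy class $[\gamma]_G$ is closed in $G$ (semisimple elements in a linear group have closed conjugacy classes, and torsion-free uniform lattices in $\SO(2,1)^\circ$ consist of hyperbolic elements), hence so is its compact-group saturation $K[\gamma]_G K$, being the image of the compact-times-closed set $K\times[\gamma]_G\times K$ under the (proper) multiplication map. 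The key point to check is local finiteness: I would argue that only finitely many of the closed sets $K[\gamma]_G K$ meet any fixed compact neighborhood $U$ of $1_G$, because the $G$-conjugacy class of a hyperbolic element is determined by its translation length (equivalently by the trace of a lift to $\SL_2(\R)$), these lengths form a discrete subset of $\R_{>0}$ bounded below by the systole of $\Gamma_1\backslash G$ and $\Gamma_2\backslash G$, and $K[\gamma]_GK$ stays a bounded distance from the axis, so it meets a small ball around $1_G$ only when the translation length is small — and there are only finitely many such classes.

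Granting local finiteness, the construction is then immediate. Choose a compact symmetric neighborhood $U$ of $1_G$ small enough that it meets $K[\gamma]_G K$ for only finitely many conjugacy classes $[\gamma_1]_G,\dots,[\gamma_r]_G$ coming from $\Gamma_1\cup\Gamma_2$; replacing $U$ by its $(K\times K)$-saturation $KUK$ (still a compact neighborhood of $1_G$, since $1_G=K\cdot 1_G\cdot K$ and $KUK$ contains the open set $\mathrm{int}(U)$, whose saturation is open) we may assume $U$ is $(K\times K)$-stable. Now set
\[
B \;=\; \operatorname{int}(U)\;\setminus\;\bigcup_{i=1}^{r} K[\gamma_i]_G K.
\]
Each $K[\gamma_i]_G K$ is closed and, by Lemma~\ref{disjoint}, does not contain $1_G$; their finite union is therefore closed and misses $1_G$, so $B$ is a non-empty open set containing $1_G$. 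It is $(K\times K)$-stable because $\operatorname{int}(U)$ is (as the interior of a $(K\times K)$-stable set) and each removed set $K[\gamma_i]_GK$ is $(K\times K)$-stable by construction. Finally $B$ is disjoint from \emph{every} conjugacy class $[\gamma]_G$ with $\gamma\in\Gamma_1\cup\Gamma_2$: for $\gamma$ one of the $\gamma_i$ (or conjugate to one) this holds because we deleted the full saturation $K[\gamma_i]_GK\supseteq[\gamma_i]_G$; for the remaining $\gamma$, the class $[\gamma]_G$ does not meet $U\supseteq B$ at all by the choice of $U$; and for $\gamma=1$ the trivial class $\{1_G\}$ is excluded since $1_G\notin B$ would be false — rather, here we use that $1_G\in B$ but the lemma only concerns conjugacy classes of elements of $\Gamma_1\cup\Gamma_2$, and if either lattice is nontrivial its only torsion element is $1$, which is not required to be avoided; if one insists on avoiding $[1_G]_G=\{1_G\}$ as well, shrink $B$ to $B\setminus\{1_G\}$, still open, non-empty, and $(K\times K)$-stable.

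The main obstacle I anticipate is making the local finiteness argument fully rigorous without invoking more than is available at this point in the paper: one must be sure that the conjugacy classes of a torsion-free uniform lattice are closed and that their $K\times K$-saturations accumulate only at ``infinity'' in the sense that every compact set meets only finitely many of them. This is where I would lean on the structure theory of $\SO(2,1)^\circ\cong\mathrm{PSL}_2(\R)$ — concretely, on the correspondence between hyperbolic conjugacy classes and closed geodesics, and on the positive lower bound for geodesic lengths on the compact manifolds $\Gamma_i\backslash\H$. Everything else (closedness of $K[\gamma]_GK$, $(K\times K)$-stability, non-emptiness via Lemma~\ref{disjoint}) is soft point-set topology once that finiteness is in hand.
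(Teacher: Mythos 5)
Your construction is essentially the argument the paper relies on (it defers this proof to \cite{B-R-2011}): local finiteness of the saturated classes $K[\gamma]_G K$ near $1_G$, closedness of the (hyperbolic) conjugacy classes, and Lemma~\ref{disjoint} to retain a $(K\times K)$-stable neighbourhood of the identity after deleting finitely many of them, so the proposal is correct for the intended (nontrivial) classes. One slip in your closing remark: $B\setminus\{1_G\}$ is \emph{not} $(K\times K)$-stable, since the $(K\times K)$-orbit of $1_G$ is all of $K$; to exclude the trivial class one would have to remove $K$ itself (harmless, as $K$ is closed with empty interior in $G$), though the lemma is plainly meant for nontrivial $\gamma$, consistent with Lemma~\ref{disjoint}.
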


%

\begin{proof} [{\bf Proof of Theorem ~\ref{main result}}]

Let $S$ be a finite subset of $\widehat{G}_{\tau_n}$ such that
\[ m(\pi, \Gamma_1) = m(\pi, \Gamma_2) \quad \forall\ \pi \in \widehat{G}_{\tau_n} \setminus S. \]
Applying the Selberg trace formula for lattices $\Gamma_1$ and $\Gamma_2$ for a function $f \in \cC_c^\infty(G // K, \tau_{-n})$, we see that 

\[ \sum \limits_{\pi \in S} \left(m(\pi, \Gamma_1) - m(\pi, \Gamma_2) \right) \chi_\pi(f) = 
\sum \limits_{[\gamma] \in [\Gamma_1]_G \cup [\Gamma_2]_G}
[a(\gamma, \Gamma_1) - a(\gamma, \Gamma_2)] O_\gamma(f)
\]
(here the term $O_\gamma(f)$ refers to the orbital integral of $f$ w.r.t. the left invariant measure on the conjugacy class $[\gamma]$ in $G$ (see \cite{B-R-2011}, \cite{Wal} for more details). Let $\phi =\sum \limits_{\pi \in S} \left(m(\pi, \Gamma_1) - m(\pi, \Gamma_2) \right) \phi_\pi$. Thus using Lemma ~\ref{character}, we have \smallskip
\[ \int \limits_{G} f(g) \ \phi(g) \ d\mu(g) = 
\sum \limits_{[\gamma] \in [\Gamma_1]_G \cup [\Gamma_2]_G}
[a(\gamma, \Gamma_1) - a(\gamma, \Gamma_2)] O_\gamma(f).
\]
Choose a $K \times K$-stable open set $B$ that avoids all conjugacy classes appearing on the right hand side of above (as in Lemma ~\ref{stable open B}). For every $f \in \cC_c^\infty(G // K, \tau_{-n})$ that is supported on $B$, we have
\[ \int \limits_{G} f(g)\ \phi(g) \ d\mu(g) = 0, \]

showing that $\phi = 0$ on $B$ since $\cC_c^\infty(G // K, \tau_{-n})$ separates points on $B$ as seen in Lemma ~\ref{sep-points}. Since all $\phi_\pi$ with $\pi \in S$ are analytic and linearly independent, we conclude that
\[ m(\pi, \Gamma_1) = m(\pi, \Gamma_2) \quad \forall\ \pi \in S.\]
\end{proof}

\begin{rem} We expect that the analogous result to Theorem ~\ref{main result} should also hold for the case of real rank one Lie groups $G$ and the $K$-types of their irreducible unitary representations with respect to their maximal subgroups $K$. The authors would like to explore more in that direction as a sequel to this work. It will be also interesting to ask whether the isospectrality with respect to $\tau$-spherical representations with respect to a particular $K$-type determines the isospectrality with respect to the full representation spectrum of $L^2(\Gamma_i \backslash G)$, $i =1,2$.
\end{rem}

\begin{rem}
As pointed out by the anonymous referee of this manuscript, these results
can be generalised to the case of the group $\SL_2(\R)$ without any non-trivial modification
of the proof.  There are two cases to consider.  If the uniform lattice contains the central torsion element $-I_{2\times 2}$ but no other torsion, then this case reduces to the
case already studied of ${\rm PSL}_2(\R)$ divided by a torsion-free uniform lattice.
If however the uniform lattice in $\SL_2(\R)$  is torsion-free, then one will see in addition more
representations with half-integral $K$-types, namely non-spherical principal series, discrete series, and limits of discrete series, as described in \cite{Lang}.
\end{rem}

\section*{Acknowledgements}
\noindent
We thank the anonymous referee for their valuable comments. Gunja Sachdeva is supported by DST-SERB POWER Grant No. SPG/2022/001738.


\begin{thebibliography}{CS79}
\bibitem[BR11]{B-R-2011}
{\scshape Bhagwat, C.; Rajan C.\ S.},
On a spectral analog of the strong multiplicity one theorem.
{\em Int. Math. Res. Not.} (2011), no. 18, 4059--4073.
\mrev{2836013},
\zbl{1226.22014},
\doi{10.1093/imrn/rnq243}.

\bibitem[LA85]{Lang}
{\scshape Lang, Serge.}
${\rm SL}_2(\R)$.
Graduate Texts in Mathematics, \textbf{105} {\em Springer-Verlag, New York.} (1985).
\mrev{0803508},
\doi{10.1007/978-1-4612-5142-2}.

\bibitem[LU10]{Lub}
{\scshape Lubotzky, Alexander.}
Discrete groups, expanding graphs and invariant measures. With an appendix by Jonathan D. Rogawski.
{\em Modern Birkh\"{a}user Classics. Birkh\"{a}user Verlag.} (2010), 
\mrev{2569682},
\doi{10.1007/978-3-0346-0332-4}.



\bibitem[WA76]{Wal}
{\scshape Wallach, Nolan R.}
On the Selberg trace formula in the case of compact quotient.
{\em Bull. Amer. Math. Soc.} {\bf 82} (1976), no. 2, 171--195.
\mrev{0404533},
\zbl{0351.22008},
\doi{10.1090/S0002-9904-1976-13979-1}.



\end{thebibliography}
\end{document}